\newtheorem{theorem}{Theorem}%[section]
\newtheorem{corollary}[theorem]{Corollary}
 \newcommand{\beq}{\begin{equation}}
\newcommand{\eeq}{\end{equation}}
\newenvironment{proof}{\medbreak\noindent{\it Proof.}\rm}{\hfill$\square$\rm}
\newcommand{\Rn}{{ \mathbb R}^n}
\newcommand{\C}{{\mathbb  C}}
\newcommand{\D}{{\mathbb  D}}
\newcommand{\Cn}{{\mathbb  C}^n}
\newcommand{\Rnm}{{\mathbb R}_-^n}
\newcommand{\Rnp}{{\mathbb R}_+^n}
\newcommand{\cN}{{\mathcal N}}
\newcommand{\cP}{{\mathcal P}}
\newcommand{\E}{{\mathcal E}}
\newcommand{\cL}{{\mathcal  L}}
\newcommand{\Capa}{{\operatorname{Cap}\,}}
\newcommand{\Log}{{\operatorname{Log}\,}}
\newcommand{\Exp}{{\operatorname{Exp}\,}}
\newcommand{\Vol}{{\operatorname{Vol}}}
\newcommand{\Covol}{{\operatorname{Covol}}}
\begin{document}

\begin{center}{\Large\bf Interpolation of weighted extremal functions}
\end{center}

\begin{center}{\large Alexander Rashkovskii}
\end{center}

\bigskip

\begin{abstract} An approach to interpolation of compact subsets of $\Cn$, including Brunn-Minkowski type inequalities for the capacities of the interpolating sets, was developed in \cite{CER} by means of plurisubharmonic geodesics between relative extremal functions of the given sets. Here we show that a much better control can be achieved by means of the geodesics between {\sl weighted} relative extremal functions. In particular, we establish convexity properties of the capacities that are stronger than those given by the Brunn-Minkowski inequalities.

\medskip\noindent
{\sl Mathematic Subject Classification}: 32U15, 32U20, 52A20, 52A40
\end{abstract}

\section{Introduction}

Classical complex interpolation of Banach spaces, due to Calder\'{o}n \cite{Ca} (see \cite{BL} and, for more recent developments, \cite{CEK}) is based on constructing holomorphic hulls generated by certain families of holomorphic mappings. A slightly different approach proposed in \cite{CER} rests on plurisubharmonic geodesics. The notion has been originally considered, starting from 1987, for metrics on compact K\"{a}hler manifolds (see \cite{G12} and the bibliography therein), while its local counterpart for plurisubharmonic functions on bounded hyperconvex domains of $\Cn$ was introduced more recently in \cite{BB} and \cite{R17a}, see also \cite{A}.

In the simplest case, the geodesics we need can be described as follows. Denote by
$A=\{\zeta\in\C:\:0< \log|\zeta| < 1\}$
the annulus bounded by the circles
$A_j=\{\zeta:\: \log|\zeta|=j\}$, $j=0,1$. Let $\Omega$ be a bounded hyperconvex domain in $\Cn$.
Given two plurisubharmonic functions $u_0,u_1$ in $\Omega$, equal to zero on $\partial \Omega$, we
consider the class $W$
of all plurisubharmonic functions $u(z,\zeta)$ in $\Omega\times A$, such that $$\limsup_{\zeta\to A_j} u(z,\zeta)\le u_j(z)\quad \forall z\in \Omega.$$
Its Perron envelope $\cP_W(z,\zeta)=\sup\{u(z,\zeta):\: u\in W\}$
belongs to the class and satisfies $\cP_W(z,\zeta)=\cP_W(z,|\zeta|)$, which gives rise to the functions $$u_t(z):=\cP_W(z,e^t), \quad 0<t<1,$$ the {\it geodesic} between $u_0$ and $u_1$. When the functions $u_j$ are bounded, the geodesic $u_t$ tends to $u_j$ as $t\to j$, uniformly on $\Omega$. One of the main properties of the geodesics is that they linearize the pluripotential energy functional
$$  \E(u)=\int_\Omega u(dd^c u)^n, $$
which means
\beq\label{lin} \E(u_t)=(1-t)\,\E(u_0)+t\,\E(u_1),\eeq see the details in \cite{BB} and \cite{R17a}.

In \cite{R17a}, this was adapted to the case when the endpoints $u_j$ are {\it relative extremal functions} $\omega_{K_j}$ of non-pluripolar compact sets $K_0,K_1\subset\Omega$; we recall that
$$
\omega_K(z)=\omega_{K,\Omega}(z)=\limsup_{y\to z}\, \cP_{\cN_K}(y),
$$
where ${\cN_K}$ is the collection of all negative plurisubharmonic functions $u$ in $\Omega$ with $u|_K\le -1$, see \cite{Kl}.
Note that
$$ \E(\omega_{K})=-\Capa(K),$$
where
$$ \Capa(K)=\Capa(K, \Omega)=(dd^c\omega_K)^n(\Omega)=(dd^c\omega_K)^n(K)$$
is the {\it  Monge-Amp\`ere capacity} of $K$ relative to $\Omega$.

If each $K_j$ is polynomially convex (i.e., coincides with its polynomial hull), then the functions $u_j=-1$ exactly on $K_j$, are continuous on $\overline\Omega$, and the geodesics $u_t\in C(\overline\Omega\times [0,1])$. Let
\beq\label{K_t} K_t=\{z\in \Omega:\: u_t(z)=-1\},\quad  0< t< 1,\eeq
then (\ref{lin}) implies
\beq\label{lincap} \Capa(K_t)\le (1-t)\,\Capa(K_0)+t\,\Capa(K_1).\eeq

As was shown in \cite{R17b}, the functions $u_t$ in general are different from the relative extremal functions of $K_t$. Moreover, if the sets $K_j$ are Reinhardt (toric), then $u_t =\omega_{K_t}$ for some $t\in (0,1)$ only if $K_0=K_1$, so an equality in (\ref{lincap}) is never possible unless the geodesic degenerates to a point.

Furthermore, in the toric case the capacities (with respect to the unit polydisk $\D^n$) were proved in \cite{CER} to be not just convex functions of $t$, as is depicted in ({\ref{lincap}), but logarithmically convex:
\beq\label{logcap} \Capa(K_t,\D^n)\le \Capa(K_0,\D^n)^{1-t}\,\Capa(K_1,\D^n)^t.\eeq
This was done by representing the capacities, due to \cite{ARZ}, as (co)volumes of certain sets in $\Rn$ and applying convex geometry methods to an operation of {\it copolar addition} introduced in \cite{R17b}. Furthermore, the sets $K_t$ in the toric situation were shown to be the geometric means (multiplicative combinations) of $K_j$,
exactly as in the Calder\'{o}n complex interpolation theory. And again, an equality in (\ref{logcap}) is possible only if $K_0=K_1$. It is worth mentioning that the {\sl  volumes} of $K_t$ satisfy the opposite Brunn-Minkowski inequality \cite{CE}:
 $$\Vol(K_t)\ge \Vol(K_0)^{1-t} \Vol(K_1)^t.$$

In this note, we apply the geodesic technique to {\it weighted relative extremal functions} $$u_j^c=c_j\,\omega_{K_j},\quad c_j>0,$$ the sets $K_t$ being replaced with the sets
$ K^c_t$ where the functions $u_t^c$ attain their minimal values, $-c_t$. The function $t\mapsto c_t$ turns out to be convex; moreover, it is actually linear, $c_t=(1-t)\,c_0+t\,c_1$, provided $K_0\cap K_1\neq \emptyset$.
With such an interpolation, one can have $u_t^c=c_t\,\omega_{K^c_t,\Omega}$ for a non-degenerate geodesic, in which case there is no loss in the transition from the functional $\E(u_t^c)$ to the capacity $\Capa(K^c_t)$. And in any case, we establish the weighted inequality
$$ c_t^{n+1}\Capa(K^c_t)\le (1-t)\,c_0^{n+1}\Capa(K_0)+t\,c_1^{n+1}\Capa(K_1)$$
which, for a smart choice of the constants $c_j$, is stronger than (\ref{lincap}) and even (in the toric case) than (\ref{logcap}). In particular, it implies that the function
$$t\mapsto \left(\Capa(K_t^c)\right)^{-\frac1{n+1}}$$
is concave.

In the toric setting of Reinhardt sets $K_j$ in the unit polydisk, we show that the interpolating sets $K_t^c$ actually are the geometric means, so they
do not depend on the weights $c_j$ and coincide with the sets $K_t$ in the non-weighted interpolation; we don't know if the latter is true in the general, non-toric setting.

Finally, we transfer the above results on the capacities of sets in $\Cn$ to the realm of convex geometry, developing thus the Brunn-Minkowski theory for volumes of (co)convex sets in $\Rn$ \cite{KT}, \cite{R17b}, \cite{CER}.

\section{General setting}
Here we consider the general case of $u_j^c=c_j\,\omega_{K_j}$ with $c_j>0$ and $K_j$ non-pluripolar, compact, polynomially convex subsets of a bounded hyperconvex domain $\Omega$ of $\Cn$. In this situation, the functions $u_j^c(z)=-c_j$ precisely on $K_j$ and are continuous on $\overline\Omega$, the geodesics $u_t$ converge to $u_j$, uniformly on $\Omega$, as $t\to j$, and belong to $C(\overline\Omega\times [0,1])$, as in the non-weighted case dealt with in \cite{R17a} and \cite{CER}.

Denote
$$c_t=-\min\{u_t^c(z):\: z\in \Omega\}$$
and
\begin{equation}\label{K_t^c} K^c_t=\{z\in \Omega:\: u^c_t(z)=-c_t\},\quad  0< t< 1, \end{equation}
the set where $u_t^c$ attains its minimal value on $\Omega$.

\begin{theorem}\label{gentheo} In the above setting, we have:
\begin{enumerate}
\item[(i)] $ c_t\le (1-t)\,c_0+t\,c_1$, with an equality if $K_0\cap K_1\neq \emptyset$;
\item[(ii)] the function $t\mapsto c_t^{n+1}\Capa(K_t)$ is convex:
\beq\label{wlincap} c_t^{n+1}\Capa(K^c_t)\le (1-t)\,c_0^{n+1}\Capa(K_0)+t\,c_1^{n+1}\Capa(K_1);\eeq
%\beq\label{wlincap} c_t^{n+1}\Capa(K^c_t)\le (1-t)\,c_0^{n+1}\Capa(K_0)+t\,c_1^{n+1}\Capa(K_1);\eeq
\item[(iii)] if the weights $c_j$ are chosen such that
\beq\label{equil} c_0^{n+1}\Capa(K_0)=c_1^{n+1}\Capa(K_1),\eeq
then the function
$$ V(t):=\left(\Capa(K_t^c)\right)^{-\frac1{n+1}}$$
is concave and, consequently, the function
$$ \rho(t):=V(t)^{-1}=\left(\Capa(K_t^c)\right)^{\frac1{n+1}}$$
is convex.
\end{enumerate}
\end{theorem}

\begin{proof}
{\it (i)}
Consider $v_j = c_j\,\omega_K$ for $j=0,1$,
where $K=K_0\cup K_1$. The set $K$ might be not polynomially convex, but $\omega_K$ is nevertheless a bounded plurisubharmonic function on $\Omega$ with zero boundary values, so the geodesic $v_t^c$ is well defined and converge to $v_j$, uniformly on $\Omega$, as $t\to j$ \cite[Prop.~3.1]{R17a}.
Since $v_j\le u_j^c$, we have $v_t^c\le u_t^c$. Assume $c_0\ge c_1$, then the corresponding geodesic $v_t^c = \max\{c_0\,\omega_K, -((1-t)\,c_0+t\,c_1)\}$ because the right-hand side is maximal in $\Omega\times A$ and has the prescribed boundary values at $t=0$ and $t=1$. Therefore,
$$-c_t\ge \min\{v_t^c(z):\: z\in \Omega\}\ge -((1-t)\,c_0+t\,c_1),$$
which proves the convexity of $c_t$.

To finish the proof of {\it (i)}, let $z^*\in K_0\cap K_1\neq\emptyset$, then $-c_t\le u_t^c(z^*)$. Since the convexity of the function $u_t^c(z^*)$ in $t$ implies $u_t^c(z^*)\le -((1-t)\,c_0+t\,c_1)$, we get $ c_t\ge (1-t)\,c_0+t\,c_1$ and thus the linearity.

{\it (ii)} Since $u_j^c=c_j\,\omega_{K_j}$, we have
$$\E(u_j)= c_j^{n+1}\int_\Omega (dd^c \omega_{K_j})^n = -c_j^{n+1}\Capa(K_j), \quad j=1,2.$$
For any fixed $t$, the function $u_t^c=-c_t$ on $K_t^c$, so $u_t^c\le -c_t\,\omega_{K_t^c}$. By \cite[Cor. 2.2]{R17a}, this implies
$$ \E(u_t^c)\le \E(c_t\,\omega_{K_t^c})=-c_t^{n+1}\Capa(K_t^c),$$
and  (\ref{wlincap}) follows from the geodesic linearization property (\ref{lin}).

{\it (iii)} It suffices to prove the concavity of the function $V$.
When the weights $c_j$ satisfy (\ref{equil}), inequality  (\ref{wlincap}) rewrites as
$$V(t)\ge  \frac{c_t}{c_0}V(0)$$
and, since
$$ c_1= \frac{V(1)}{V(0)}c_0,$$
this gives us
$$V(t)\ge (1-t)\,V(0)+t\,V(1),$$
which completes the proof.
\end{proof}

\medskip
The convexity/concavity results in this theorem are stronger than inequality (\ref{lincap}) obtained in \cite{R17a} by the geodesic interpolation $u_t$ of non-weighted extremal functions. In addition, the non-weighted geodesic $u_t$ is very unlikely to be the extremal function of the set $K_t$ (as shown in \cite{R17b}, this is never the case in the toric situation, unless $K_0=K_1$), while this is perfectly possible in the weighted interpolation. For example, given $K_0\Subset \Omega$, let
$$ K_1=\left\{z\in\Omega:\: \omega_{K_0}(z)\le-\frac12\right\},$$
then $\omega_{K_1,\Omega}=\max\{2\omega_{K_0,\Omega},-1\}$. For $c_0=2$ and $c_1=1$ we get
$$u_t^c=\max\{2\omega_{K_0},-2+t\}= (2-t)\,\omega_{K_t^c}$$
with
$$K_t^c=\{z\in\Omega:\: \omega_{K_0}(z)\le-1+t/2\},$$
so $$\Capa(K_t^c)=\left(1-\frac{t}2\right)^{-1}|\E(u_t^c)|=\left(1-\frac{t}2\right)^{-1-n}\Capa(K_0).$$

%%%%%%%%%%%%

\section{Toric case}

In this section, we assume $\Omega=\D^n$, the unit polydisk, and $K_0,K_1\subset\D^n$ to be non-pluripolar, polynomially convex compact Reinhardt (multicircled, toric) sets. Polynomial convexity of such a set $K$ means that its logarithmic image
$$\Log K=\{s\in\Rnm:\: (e^{s_1},\ldots,e^{s_n})\in K\}$$
is a {\it complete} convex subset of $\Rnm$, i.e., $\Log K+\Rnm\subset\log K$; we will also say that $K$ is {\it complete logarithmically convex}. The functions $c_j\,\omega_{K_j}$ are toric, and so is their geodesic $u_t^c$. Note that since $K_0\cap K_1\neq\emptyset$, inequality (\ref{wlincap}) and the concavity/convexity statements of Theorem~\ref{gentheo}(iii) hold true.

It was shown in \cite{CER} that the sets $K_t$ defined by (\ref{K_t}) for the geodesic interpolation of non-weighted toric extremal functions $\omega_{K_j}$ are, as in the classical interpolation theory, the geometric means $K_t^\times$  of $K_j$.
 Here we extend the result to the weighted interpolation, which shows, in particular, that the sets $K_t^c$ do not depend on the weights $c_j$.
The relation $K_t^\times\subset K_t^c$ is easy, while the reverse inclusion is more elaborate; we mimic the proof of the corresponding relation for the non-weighted case \cite{CER} that rests on a machinery developed in \cite{R17b}.

Any toric plurisubharmonic function $u(z)$ in $\D^n$ gives rise to a convex function
\beq\label{convim}\check u(s)=u (e^{s_1},\ldots,e^{s_n}), \quad s\in\Rnm,\eeq
and the geodesic $u_t^c$ generates the function $\check u_t^c$, convex in $(s,t)\in \Rnm\times (0,1)$.

Given a convex function $f$ on $\Rnm$, we extend it to the whole $\Rn$ as a lower semicontinuous convex function on $\Rn$, equal to $+\infty$ on $\Rn\setminus{\overline \Rnm}$, and we denote $\cL[f]$ its {\it Legendre transform}:
$$ \cL[f](x)=\sup_{y\in\Rn}\{\langle x,y\rangle -f(y)\}.$$
Evidently, $\cL[f](x)=+\infty$ if $x\not\in\overline{\Rnp}$, and the Legendre transform is an involutive duality between convex functions on $\Rnp$ and $\Rnm$.

It was shown in \cite{R17b} that for the relative extremal function $\omega_K=\omega_{K,\D^n}$,
$$\cL[\check \omega_K]=\max\{h_Q+1,0\},$$
where
$$h_Q(a)=\sup_{s\in Q} \langle a,s\rangle,\quad a\in\Rnp
$$
is the support function of the convex set $Q=\Log K \subset\Rnm$.
Therefore, for a weighted relative extremal function $u=c\,\omega_{K}$ we have
\beq\label{Lu} \cL[\check u](a) = c_j\,\cL[\check \omega_{K_j}](c_j^{-1}a)
= \max\{h_Q(a)+c_j,0\}.
\eeq

\begin{theorem}\label{cgeom} Given two non-pluripolar complete logarithmically convex compact Reinhardt sets  $K_0,K_1\subset\D^n$ and two positive numbers $c_0$ and $c_1$, let $u_t^c$ be the geodesic connecting the functions $u_0=c_0\,\omega_{K_0}$ and $u_1=c_1\,\omega_{K_1}$. Then the interpolating sets $K_t^c$ defined by (\ref{K_t^c}) coincide with the geometric means
$$K_t^\times:=K_0^{1-t}K_1^t=\{z:\: |z_l|=|\eta_l|^{1-t} |\xi_l|^{t}, \ 1\le l\le n,\ \eta\in K_0,\ \xi\in K_1\}.$$
\end{theorem}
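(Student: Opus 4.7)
The plan is to adapt the toric convex-duality machinery of \cite{R17b} (used for the non-weighted case in \cite{CER}) to weighted extremal functions, and to identify $\Log K_t^c$ with the subdifferential at the origin of a Legendre transform. In logarithmic coordinates, set $Q_j = \Log K_j \subset \Rnm$; completeness of the sets $K_j$ gives $0 \in K_0 \cap K_1$, so Theorem~\ref{gentheo}(iii) yields $m_t = -c_t$. The convex image $\check u_t^c$ from (\ref{convim}) is jointly convex in $(s,t)$, with $\check u_j^c \equiv -c_j$ on $Q_j$.

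The easy inclusion $K_t^\times \subseteq K_t^c$ will follow immediately from joint convexity: for $s = (1-t)s_0 + ts_1$ with $s_j \in Q_j$, one has $\check u_t^c(s) \le (1-t)(-c_0) + t(-c_1) = -c_t$, which combined with $\check u_t^c \ge -c_t$ forces equality.

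For the reverse inclusion I will pass to the Legendre transform $\psi_t := \cL[\check u_t^c]$. The central ingredient, provided by the toric geodesic theory of \cite{R17b}, is that Legendre duality linearizes the geodesic:
\beq\label{legplan} \psi_t(a) = (1-t)\psi_0(a) + t\,\psi_1(a); \eeq
by (\ref{Lu}), $\psi_j(a) = \max\{h_{Q_j}(a) + c_j, 0\}$, and $\psi_t(0) = c_t$. The key observation is then that $s \in \Log K_t^c$ if and only if $s \in \partial \psi_t(0)$: the equation $\check u_t^c(s) = -c_t$ is precisely the statement that $\langle a, \cdot \rangle + c_t$ is an affine minorant of $\psi_t$ supporting it at $a = 0$. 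A short case split on $\psi_j$ (according to the sign of $h_{Q_j}(a) + c_j$) will identify $\partial \psi_j(0)$ with $\{s : \langle a,s\rangle \le h_{Q_j}(a)\ \forall a \in \overline{\Rnp}\}$, which by completeness of $Q_j$ in $\Rnm$ equals $Q_j$. Moreau--Rockafellar's sum rule, with qualification automatic since $\psi_0$ and $\psi_1$ share the full-dimensional effective domain $\overline{\Rnp}$, then yields
$$\partial \psi_t(0) = (1-t)\,\partial \psi_0(0) + t\,\partial \psi_1(0) = (1-t)Q_0 + tQ_1 = \Log K_t^\times,$$
completing the proof.

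The main obstacle will be justifying the Legendre linearization (\ref{legplan}) in the weighted setting; this is the technical heart of the argument and the reason for appealing to the framework of \cite{R17b}. Once (\ref{legplan}) is in hand, the remaining steps are a routine exercise in convex duality.
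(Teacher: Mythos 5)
Your proof is correct, and it shares its key ingredient with the paper's own argument: the Legendre linearization of toric geodesics from \cite[Thm.~5.1]{R17b}, which is formula (\ref{guan}) of the paper; your linearization of $\psi_t:=\cL[\check u_t^c]$ is just the dual form of (\ref{guan}), obtained by applying $\cL$ and using that $\cL$ is an involution on lower semicontinuous convex functions. In particular, the step you flag as ``the main obstacle'' is not one: (\ref{guan}) holds for geodesics between bounded toric plurisubharmonic functions with zero boundary values in general, and the paper itself applies it to the weighted endpoints $c_j\,\omega_{K_j}$ (explicitly in the proof of Theorem~\ref{equal}, implicitly in the first display of the proof of Theorem~\ref{cgeom}). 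Where you genuinely diverge is the endgame. The paper proves $\Log K_t^c\subset Q_t:=(1-t)Q_0+t\,Q_1$ (with $Q_j=\Log K_j$) by separation: for $\xi\notin Q_t$ it picks $b\in\Rnp$ with $\langle b,\xi\rangle>h_{Q_t}(b)$, rescales $b$ so that $h_{Q_j}(b)\ge-c_j$, and substitutes $b$ into the supremum defining $\cL\left[(1-t)\cL[\check u_0]+t\,\cL[\check u_1]\right](\xi)$ to conclude $\check u_t^c(\xi)>-c_t$. You instead identify $\Log K_t^c$, the argmin set of $\check u_t^c$, with $\partial\psi_t(0)$, compute $\partial\psi_j(0)=Q_j$ (your case analysis and the use of completeness there are sound), and invoke Moreau--Rockafellar; the qualification hypothesis does hold, since both effective domains equal $\overline{\Rnp}$, whose relative interiors meet, and the sum rule is then valid at every point, including the boundary point $0$. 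Your route buys the equality of the two sets in a single stroke --- which, note, makes your separate ``easy inclusion'' paragraph redundant --- and it even reproves $m_t=-c_t$ for free, since $\psi_t(0)=(1-t)\psi_0(0)+t\,\psi_1(0)=c_t$, so you never need Theorem~\ref{gentheo}(iii) or the observation $0\in K_0\cap K_1$. The paper's route buys elementarity: it uses nothing beyond the definition of $\cL$ and the support-function description of closed convex sets, with no subdifferential calculus at boundary points of domains. Two cosmetic repairs: the supporting affine minorant of $\psi_t$ at $0$ should be written $a\mapsto\langle s,a\rangle+c_t$ (the slope is $s$, not $a$); and at the very end you should state, as the paper does, that $K_t^c$ and $K_t^\times$ are complete Reinhardt compacts, so that equality of their logarithmic images does imply equality of the sets themselves.
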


\begin{proof} Since the sets $K_t^\times$ and $K_t^c$ are complete logarithmically convex, it suffices to prove that $Q_t:=\Log K_t^\times $ coincides with $Q_t^c:=\Log K_t^c$.

The inclusion $Q_t\subset Q_t^c$ follows from convexity of the function $\check u_t^c(s)$ in $(s,t)\in \Rnm\times (0,1)$: if $s\in Q_t$, then $s=(1-t)\,s_0+t\,s_1$ for some $s_j\in Q_j$, so
$$ \check u_t(s)\le (1-t)\, \check u_0(s_0)+ t\, \check u_1(s_1)=c_t,$$
while we have $\check u_t(s)\ge -c_t$ for all $s$. This gives us $s\in Q_t^c$.

To prove the reverse inclusion, take an arbitrary point $\xi\in\Rnm\setminus Q_t$, then there exists $b\in\Rnp$ such that
\beq\label{out} \langle b,\xi\rangle > h_{Q_t}(b)=(1-t)h_{Q_0}(b)+ t\, h_{Q_1}(b).\eeq
By the homogeneity of the both sides, we can assume $h_{Q_0}(b)\ge-c_0$ and $h_{Q_1}(b)\ge-c_1$. Then, by (\ref{Lu}) and (\ref{out}), we have
\begin{eqnarray*}
\check u_t(\xi) &=& \sup_{a}[\langle a,\xi\rangle - (1-t)\max\{h_{Q_0}(a)+c_0,0\} - t \max\{h_{Q_1}(a)+c_1,0\}]\\
&\ge & \langle b,\xi\rangle - (1-t)\max\{h_{Q_0}(b)+c_0,0\} - t \max\{h_{Q_1}(b)+c_1,0\}\\
&>& (1-t)[h_{Q_0}(b)-(h_{Q_0}(b)+1)] + t [h_{Q_1}(b)-(h_{Q_1}(b)+1)]=-1,
\end{eqnarray*}
so $\xi\not\in Q_t^c$.
\end{proof}

\medskip
Now the corresponding assertions of Theorem~\ref{gentheo} can be stated as follows.

\begin{theorem}\label{torcap} For non-pluripolar complete logarithmically convex compact Reinhardt sets  $K_0,K_1\subset\D^n$, the inequality
\beq\label{torBM} c_t^{n+1}\Capa(K_t^\times, \D^n)\le (1-t)\,c_0^{n+1}\Capa(K_0,\D^n)+t\,c_1^{n+1}\Capa(K_1,\D^n)\eeq
holds true for any $c_0,c_1>0$ and $c_t=(1-t)\,c_0+t\,c_1$.

In particular, the function
$$t\mapsto \left(\Capa(K_t^\times,\D^n)\right)^{-\frac1{n+1}}$$
is concave and, consequently, the function
$$t\mapsto\left(\Capa(K_t^\times,\D^n)\right)^{\frac1{n+1}}$$
is convex.
\end{theorem}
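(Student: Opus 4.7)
The plan is to combine Theorem~\ref{gentheo} and Theorem~\ref{cgeom} in a more or less mechanical way, with one minor verification to be done first. I would begin by observing that any non-empty complete Reinhardt set contains the origin, since $\Log K + \Rnm \subset \Log K$ forces $\Log K$ to be unbounded below in every coordinate, which translates back to $0 \in K$. Applied to $K_0$ and $K_1$, this shows $0 \in K_0\cap K_1$, so in particular $K_0\cap K_1\neq\emptyset$ and the hypothesis of Theorem~\ref{gentheo}(iii) is satisfied for every choice of weights $c_0,c_1>0$.

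Given this, Theorem~\ref{gentheo}(iii) already delivers the inequality
\beq\label{interm}
c_t^{n+1}\Capa(K_t^c,\D^n)\le (1-t)\,c_0^{n+1}\Capa(K_0,\D^n)+t\,c_1^{n+1}\Capa(K_1,\D^n)
\eeq
for the set $K_t^c$ arising from the weighted geodesic between $u_0=c_0\,\omega_{K_0}$ and $u_1=c_1\,\omega_{K_1}$. By Theorem~\ref{cgeom}, in the toric situation $K_t^c=K_t^\times$, and crucially the right-hand side $K_t^\times$ does not involve $c_0$ or $c_1$ at all. Substituting $K_t^\times$ for $K_t^c$ in (\ref{interm}) then gives exactly the asserted inequality (\ref{torBM}).

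For the concavity statement, I would invoke Theorem~\ref{gentheo}(iv): since (\ref{wlincap}) has just been established for these toric sets, I can choose weights $c_0,c_1$ satisfying the equilibrium condition $c_0^{n+1}\Capa(K_0,\D^n)=c_1^{n+1}\Capa(K_1,\D^n)$, and Theorem~\ref{gentheo}(iv) yields concavity of $t\mapsto \Capa(K_t^c,\D^n)^{-1/(n+1)}$. The point where toricity is essential is that $K_t^c=K_t^\times$ is the \emph{same} set regardless of the weights used to produce the geodesic, so the concavity conclusion obtained with the equilibrium weights is automatically a statement about $t\mapsto \Capa(K_t^\times,\D^n)^{-1/(n+1)}$. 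Convexity of $t\mapsto \Capa(K_t^\times,\D^n)^{1/(n+1)}$ then follows formally, since the reciprocal of a positive concave function is convex.

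There is essentially no hard step here: the substantive work was done in Theorems~\ref{gentheo} and~\ref{cgeom}. The only item that could trip one up is the subtle logical point in the concavity part, namely that in the general setting of Theorem~\ref{gentheo}(iv) the set $K_t^c$ depends on the weights, so the concavity assertion is \emph{a priori} weight-dependent, whereas in the toric case Theorem~\ref{cgeom} removes that dependence and turns it into a bona fide statement about the geometric mean $K_t^\times$.
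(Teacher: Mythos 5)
Your proposal is correct and follows exactly the route the paper itself takes: the paper states Theorem~\ref{torcap} as an immediate combination of Theorem~\ref{gentheo}(iii)--(iv) with the identification $K_t^c=K_t^\times$ from Theorem~\ref{cgeom}, having already noted at the start of Section~3 that $K_0\cap K_1\neq\emptyset$ in the toric setting (your explicit verification that $0\in K_0\cap K_1$ for complete Reinhardt compacta fills in that remark). Your observation that the weight-independence of $K_t^\times$ is what lets the equilibrium-weight choice in Theorem~\ref{gentheo}(iv) produce an unconditional concavity statement is precisely the point of the paper's argument.
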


As we saw in the example in the previous section, sometimes one can have $u_t=\omega_{K_t^c}$ for $u_j=c_j\,\omega_{K_j}$, in which case (\ref{torBM}) becomes an equality. Our next result determines when this is possible for the toric case.

\begin{theorem}\label{equal} In the conditions of Theorem~\ref{cgeom}, the geodesic $u_\tau^c$ equals $c_\tau\,\omega_{K_\tau}$ for some $\tau\in (0,1)$ if and only if
$$K_1^{c_0}=K_0^{c_1},$$ that is, $c_0\,\Log K_1 =c_1\,\Log K_0$.
\end{theorem}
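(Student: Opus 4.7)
The plan is to pass to Legendre duals on $\Rnp$ and reduce the equality $u_\tau^c = c_\tau\,\omega_{K_\tau^c}$ to a scalar pointwise identity, which I analyze via the convexity structure of $s\mapsto\max\{s,0\}$ together with the positive $1$-homogeneity of support functions.

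Set $Q_j = \Log K_j$ and $f_j(a) := h_{Q_j}(a) + c_j$. By (\ref{Lu}), $\cL[\check u_j^c] = \max\{f_j,0\}$ on $\Rnp$, and by Theorem~\ref{cgeom} one has $\Log K_\tau^c = (1-\tau)Q_0+\tau Q_1$, so a second application of (\ref{Lu}) gives $\cL[\check v] = \max\{(1-\tau)f_0+\tau f_1,\,0\}$ for $v := c_\tau\,\omega_{K_\tau^c}$. Since the Legendre transform linearizes toric geodesics (this is the manipulation already used in the proof of Theorem~\ref{cgeom}), $\cL[\check u_\tau^c] = (1-\tau)\max\{f_0,0\}+\tau\max\{f_1,0\}$, and hence the hypothesis $u_\tau^c = c_\tau\,\omega_{K_\tau^c}$ is equivalent to the pointwise identity
\begin{equation*}
\max\{(1-\tau)f_0(a)+\tau f_1(a),\,0\} = (1-\tau)\max\{f_0(a),0\}+\tau\max\{f_1(a),0\},\quad a\in\Rnp.
\end{equation*}

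Since $s\mapsto\max\{s,0\}$ is affine only on $(-\infty,0]$ and $[0,\infty)$, equality at the interior point $\tau$ forces $f_0(a)$ and $f_1(a)$ to share a weak sign for every $a\in\Rnp$. A short perturbation argument using positive $1$-homogeneity of $h_{Q_j}$ then rules out the borderline configuration $f_0(a)>0=f_1(a)$: scaling $a\mapsto\mu a$ with $\mu>1$ close to $1$ yields $f_1(\mu a)<0$ while $f_0(\mu a)>0$, contradicting the sign condition. Hence $\{f_0>0\}=\{f_1>0\}$ and $\{f_0=0\}=\{f_1=0\}$ in $\Rnp$, so the unit sublevel sets of the positively $1$-homogeneous nonnegative functions $\rho_j := -h_{Q_j}/c_j$ agree; homogeneity upgrades this to $\rho_0\equiv\rho_1$ on $\Rnp$, and since the support function determines a complete closed convex subset of $\Rnm$, we conclude $Q_0/c_0 = Q_1/c_1$, i.e., $c_0\,\Log K_1 = c_1\,\Log K_0$.

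The converse is immediate: if $c_1\,\Log K_0 = c_0\,\Log K_1$, then $f_1 = (c_1/c_0)\,f_0$, and with $g := f_0/c_0$ the pointwise identity reduces trivially to $c_\tau\,\max\{g,0\} = ((1-\tau)c_0+\tau c_1)\,\max\{g,0\}$. The main obstacle I anticipate is the perturbation step, since the convexity analysis alone permits a mismatch between the zero set of one $f_j$ and a strict-sign point of the other; positive $1$-homogeneity of the $h_{Q_j}$ is exactly what one needs to convert any such borderline coincidence into a genuine sign reversal at a nearby rescaled point.
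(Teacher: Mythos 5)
Your proposal is correct and takes essentially the same route as the paper's proof: both reduce, via the Legendre-transform linearization of toric geodesics (formula (\ref{guan}), the same manipulation underlying Theorem~\ref{cgeom}), to the pointwise identity $\max\{(1-\tau)f_0+\tau f_1,0\}=(1-\tau)\max\{f_0,0\}+\tau\max\{f_1,0\}$ with $f_j=h_{Q_j}+c_j$, $Q_j=\Log K_j$, and then deduce $c_0\,Q_1=c_1\,Q_0$ from the coincidence of the sets $\{f_0\le 0\}$ and $\{f_1\le 0\}$ (the paper phrases this last step via the copolar involution, you via support-function determination --- the same fact). Your homogeneity-perturbation argument ruling out the borderline case $f_0(a)>0=f_1(a)$, and your explicit check of the converse direction, supply details that the paper's proof asserts without comment, so your write-up is if anything more complete.
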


\begin{proof} We will use the toric geodesic representation formula established in \cite[Thm. 5.1]{R17b},
\beq\label{guan}\check u_t =\cL\left[ (1-t)\cL[\check u_0] + t\cL[\check u_1]\right],\eeq
which is a local counterpart of Guan's result \cite{Gu} for compact toric manifolds;
here $\check u$ is the convex image (\ref{convim}) of the toric plurisubharmonic function $u$.

Let $Q_t=\log K_t$, $0\le t\le 1$.
By (\ref{Lu}), $u_\tau^c=c_\tau\,\omega_{K_\tau}$ means
$$ (1-\tau)\max\{h_{Q_0}(a)+c_0,0\} + \tau \max\{h_{Q_1}(a)+c_1,0\}=\max\{h_{Q_\tau}(a)+c_\tau,0\},$$
or, which is the same,
$$ \max\{h_{(1-\tau)Q_0}(a)+(1-\tau)c_0,0\} +  \max\{h_{\tau Q_1}(a)+\tau c_1,0\}=\max\{h_{Q_\tau}(a)+c_\tau,0\}$$
for all $a\in\Rnp$. Therefore, $h_{Q_0}(a)\le -c_0$ if and only if $h_{Q_1}(a)\le -c_1$, so $c_0\,Q_0^\circ = c_1\,Q_1^\circ$ and, since $(c \,Q)^\circ =c^{-1}Q^\circ$, we get
$c_0\,Q_1=c_1\,Q_0$. Here $Q^\circ$ is the {\it copolar} (\ref{copolar}) to the set $Q$, see the beginning of the next section.
\end{proof}

\section{Covolumes}

In the toric case, the Monge-Amp\`ere capacities with respect to the unit polydisk can be represented as volumes of certain sets \cite{ARZ}, \cite{R17b}. Namely, if $K\Subset\D^n$ is complete and logarithmically convex, then $Q:=\Log K\subset \Rnm$ and
\beq\label{capcovol}\Capa (K,\D^n)=n!\,\Covol(Q^\circ):=n!\, \Vol({\Rnp\setminus Q^\circ}),\eeq
where the convex set $Q^\circ\subset\Rnp$ defined by
\beq\label{copolar}Q^\circ=\{x\in\Rn: h_Q(x) \le -1 \}= \{x\in\Rn: \langle x,y\rangle \le -1 \ \forall y\in Q\}\eeq
is, in the terminology of \cite{R17b}, the {\it copolar} to the set $Q$. In particular,
$$\Capa(K_t^\times,\D^n)=n!\,\Covol(Q_t^\circ)$$
for the copolar $Q_t^\circ$ of the set $Q_t=(1-t)Q_0+t\,Q_1$; we would like to stress that $Q_t^\circ\neq (1-t)Q_0^\circ+t\,Q_1^\circ$.

Convex complete subsets $P$ of $\Rnp$ (i.e., $P+\Rnp\subset P$) appear in singularity theory and complex analysis (see, for example,  \cite{KiR}, \cite{KaK}, \cite{KT}, \cite{Ku1}, \cite{R00}, \cite{R09}), their {\it covolumes} (the volumes of $\Rnp\setminus P$) being used for computation of the multiplicities of mappings, etc.  Such a set $P$ generates, by the same formula (\ref{copolar}), its copolar $P^\circ\subset\Rnm$, whose exponential image $\Exp P^\circ$ (the closure of all points $(e^{s_1},\ldots,e^{s_n})$ with $s\in P^\circ$) is a complete logarithmically convex subset of $\D^n$. Since taking the copolar is an involution, the representation (\ref{capcovol}) translates coherently the inequalities on the capacities to those on the (co)volumes. Namely, $\Capa(Q_j)$ becomes replaced by $\Covol(P_j)$ with $P_j=Q_j^\circ\subset\Rnp$ for $j=0,1$, while $\Capa(Q_t)$ has to be replaced with the covolume of the set whose copolar is $Q_t$, that is, with $\left((1-t)\,P_0^\circ+t\,P_1^\circ\right)^\circ$. The operation of {\it copolar addition}
$$P_0\oplus P_1:=\left(P_0^\circ+P_1^\circ\right)^\circ$$
was introduced in \cite{R17b}. In particular, it was shown there that the copolar sum of any pair of cosimplices in $\Rnp$, unlike their Minkowski sum, is still a simplex.

\medskip

\begin{corollary}\label{torcovol} Let  $P_0,P_1$ be non-empty convex complete subsets of $\Rnp$, and let the interpolating sets $P_t^\oplus$ be defined by
$$ P_t^\oplus=\left((1-t)P_0^\circ+tP_1^\circ\right)^\circ,\quad 0< t< 1.$$ Then the inequality
$$ c_t^{n+1}\Covol(P_t^\oplus)\le (1-t)\,c_0^{n+1}\Covol(P_0)+t\,c_1^{n+1}\Covol(P_1)$$
holds true for any $c_0,c_1>0$ and $c_t=(1-t)\,c_0+t\,c_1$.

In particular, the function
$$ V^\oplus[P](t):=\left(\Covol(P_t^\oplus)\right)^{-\frac1{n+1}}$$
is concave and, consequently, the function
$$ \rho^\oplus[P](t):=\left(\Covol(P_t^\oplus)\right)^{\frac1{n+1}}$$
is convex.
\end{corollary}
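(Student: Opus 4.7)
The plan is to translate the covolume statement into the toric capacity setting covered by Theorem~\ref{torcap}, via the copolar/exponential dictionary together with identity (\ref{capcovol}).

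First I set $Q_j := P_j^\circ \subset \Rnm$ for $j=0,1$, so that $P_j = Q_j^\circ$ since copolarity is an involution on complete convex sets. The associated Reinhardt sets $K_j := \Exp Q_j \subset \overline{\D^n}$ are complete logarithmically convex. In the regular case, where each $Q_j$ is bounded above by a negative constant in every coordinate (making $K_j \Subset \D^n$) and has non-empty interior (making $K_j$ non-pluripolar), identity (\ref{capcovol}) immediately yields
$$\Capa(K_j,\D^n) = n!\,\Covol(Q_j^\circ) = n!\,\Covol(P_j).$$
For the interpolating object, the geometric mean $K_t^\times$ has logarithmic image $\Log K_t^\times = (1-t)Q_0 + tQ_1$, so (\ref{capcovol}) again gives
$$\Capa(K_t^\times,\D^n) = n!\,\Covol\bigl(((1-t)Q_0 + tQ_1)^\circ\bigr) = n!\,\Covol(P_t^\oplus),$$
where the last equality is just the definition $P_t^\oplus = ((1-t)P_0^\circ + tP_1^\circ)^\circ$ combined with $Q_j = P_j^\circ$. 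Plugging these three identities into the inequality of Theorem~\ref{torcap} with $c_t = (1-t)c_0+tc_1$ and cancelling the $n!$ factor produces the desired covolume inequality.

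The concavity of $V^\oplus[P]$ and convexity of $\rho^\oplus[P]$ then follow exactly as in Theorem~\ref{gentheo}(iv): choose the weights so that $c_0^{n+1}\Covol(P_0) = c_1^{n+1}\Covol(P_1)$, at which point the just-proved inequality gives $V^\oplus[P](t) \ge (1-t)V^\oplus[P](0) + tV^\oplus[P](1)$ directly.

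The main technical obstacle is that for a general non-empty complete convex $P_j \subset \Rnp$ the set $Q_j = P_j^\circ$ may reach the coordinate hyperplanes $\{y_i = 0\}$, so that $K_j$ is not compactly contained in $\D^n$ and (\ref{capcovol}) is not directly applicable. I would handle this by approximation: replace $Q_j$ by $Q_j^{(\delta)} := Q_j - \delta\mathbf{1}$ with $\mathbf{1}=(1,\ldots,1)$ and $\delta > 0$, so that $Q_j^{(\delta)}$ lies strictly inside $\Rnm$ and $K_j^{(\delta)} := \Exp Q_j^{(\delta)} \Subset \D^n$. Translation by a constant vector commutes with convex combinations, so the analogous shift affects the interpolate in a uniform way. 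Since copolarity reverses inclusions, $P_j^{(\delta)} := (Q_j^{(\delta)})^\circ \supset P_j$ and one has $\Covol(P_j^{(\delta)}) \nearrow \Covol(P_j)$ as $\delta \to 0^+$, and the same monotone convergence holds for the copolar of the interpolate. Applying the regular-case inequality to the $P_j^{(\delta)}$ and passing to the limit finishes the argument; the case where $\Covol(P_0) = \infty$ or $\Covol(P_1) = \infty$ renders the right-hand side infinite and so is vacuous.
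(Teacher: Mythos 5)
Your proposal is correct and takes essentially the same route as the paper: Corollary~\ref{torcovol} is obtained there precisely by translating Theorem~\ref{torcap} through the copolar involution and the representation (\ref{capcovol}), with $P_j=Q_j^\circ$ and $P_t^\oplus$ corresponding to the set whose copolar is $(1-t)Q_0+tQ_1$. Your $\delta$-shift approximation usefully makes explicit the boundary case the paper glosses over (when $P_j^\circ$ reaches the coordinate hyperplanes, so $\Exp P_j^\circ\not\Subset\D^n$); just note additionally that if some $P_j^\circ=\emptyset$ (equivalently $\Covol(P_j)=0$, i.e.\ $P_j$ is essentially all of $\Rnp$) the approximation produces pluripolar sets and is unavailable, but then $P_t^\oplus$ has zero covolume and the inequality holds trivially.
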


\medskip

Note that the convexity of $\rho^\oplus$ (following from the concavity of $V^\oplus$) implies that the function $$\tilde\rho^\oplus[P](t):= \left(\Covol(P_t^\oplus)\right)^{\frac1{n}}$$ is convex as well. Since $\tilde\rho^\oplus$ is a homogeneous function of $P$, that is, $$\tilde\rho^\oplus[c\,P](t)=c\,\tilde\rho^\oplus[P](t)$$ for all $c>0$ and $0<t<1$, its convexity is equivalent to the logarithmic convexity of the covolumes, established in \cite{CER} by convex geometry methods:
$$ \Covol(P_t^\oplus)\le \Covol(P_0)^{1-t}\Covol(P_1)^t, $$
which is just another form of the Brunn-Minkowski type inequality (\ref{logcap}).
Therefore, the concavity of the function $V^\oplus$ is a stronger property then just the logarithmic convexity of the covolumes.

\bigskip
{\bf Acknowledgemengt.} The author thanks the anonymous referee for suggestions that have improved the presentation.

\bigskip\noindent
{\sc Tek/Nat, University of Stavanger, 4036 Stavanger, Norway}

\noindent
\emph{e-mail:} alexander.rashkovskii@uis.no

\end{document}